\title{A rigidity result for axisymmetric toric Ricci solitons}
\author[S. Zhang]{Shiqiao Zhang}
\begin{document}

\begin{abstract}
    We examine a non-axisymmetric perturbation
    of a family of axisymmetric toric Einstein manifolds and Ricci solitons
    studied in Firester--Tsiamis (2024).
    We establish a rigidity result
    stating that these axisymmetric Ricci solitons
    do not admit constant-angle non-axisymmetric perturbations
    except for conformally flat cases.
    For these new cases,
    our result leads to an explicit description of the Einstein metrics
    and a classification of the Ricci solitons
    under a volume-collapsing ansatz.
\end{abstract}

\maketitle

\vspace*{-0.3in}

\section{Introduction}

Ricci solitons are the self-similar solutions to the Ricci flow \(\partial_t g = -2\Ric(g)\).
Characterized by the quasi-Einstein metric equation
\(\Ric(g) + \frac{1}{2} \Lie_V g = \lambda g\),
Ricci solitons model the formation of singularities in the Ricci flow;
see \cite{Cao09} for a survey of the role of Ricci solitons
in the singularity study of the Ricci flow.
In \cite{FT24}, Firester and Tsiamis
investigated a family of axisymmetric toric Ricci solitons in dimension four.
They classified them based on geometric properties
motivated by known families of Ricci solitons
and studied the geometric behavior of new solutions.
We show that these axisymmetric Ricci solitons are rigid
with respect to constant-angle non-axisymmetric perturbations;
specifically, no such perturbations exist
unless the Ricci soliton is conformally flat.
This rigidity comes from an off-diagonal obstruction in the Ricci soliton equation
that disappears when the metric is axisymmetric.
Based on this rigidity result,
we transfer a classification by Firester and Tsiamis
to constant-angle non-axisymmetric toric Ricci solitons.

Understanding the behavior of Ricci solitons in dimensions four and above
is essential for the singularity study
of the Ricci flow in higher dimensions,
as a foundational work of Bamler \cite{Bam18}
showed that the singular sets of Ricci flows with bounded scalar curvature
have codimension at least four.
There is no known classification of four-dimensional Ricci solitons
except in the Kähler setting,
where Bamler, Cifarelli, Conlon, and Deruelle \cite{BCCD24}
completed the classification of shrinking
Kähler--Ricci solitons in complex dimension two.
In \cite{App23}, Appleton showed that four-dimensional Ricci solitons
can exhibit multiple blow-up behaviors in the same soliton,
including orbifold singularities,
calling for the research of Ricci solitons with singularities or incompleteness.
For example, Bamler, Chow, Deng, Ma, and Zhang \cite{BCD+22}
classified the tangent flows at infinity
of steady soliton singularity models in dimension four;
Alexakis, Chen, and Fournodavlos \cite{ACF15} constructed a new family
of spherically symmetric singular Ricci solitons in all dimensions
and studied their stability under spherically symmetric perturbations;
and Hui \cite{Hui24} showed the existence of
singular rotationally symmetric steady and expanding gradient Ricci solitons
of the form
\(g = {da^2}/{h(a^2)} + a^2 g_{\Sphere_n}\)
in dimension \(n + 2 \ge 4\).

Many known examples of four-dimensional Ricci solitons
are cohomogeneity two metrics,
which are metrics that admit an isometric torus action.
In \cite{FT24}, Firester and Tsiamis
studied a family of axisymmetric toric Ricci solitons given locally by
\begin{equation} \label{eq:g.f.t}
    g = \frac{1}{q(x, y)^2} \biggl(
        \frac{dx^2}{A(x)} + \frac{dy^2}{B(y)}
        + A(x) \, ds^2 + B(y) \, dt^2
    \biggr). \tag{\(\star\)}
\end{equation}
The axisymmetric property means that the torus fibers with coordinates \((s, t)\)
are orthogonal, so the metric is diagonal.
Firester and Tsiamis classified Ricci solitons of the form \eqref{eq:g.f.t}
whose base metric
\begin{equation} \label{eq:g.base}
    \frac{1}{q(x, y)^2} \biggl(
        \frac{dx^2}{A(x)} + \frac{dy^2}{B(y)}
    \biggr)
\end{equation}
is conformally cylindrical,
which means that the function \(q\) is homogeneous of degree \(1\).
This volume-collapsing ansatz
ensures that the metric \eqref{eq:g.f.t} is asymptotically cylindrical
and allows reducing the Ricci soliton equation to an ordinary differential equation.
See \cref{sec:f.t} for a summary
of the results and methods of Firester and Tsiamis
that we make use of.

We consider a non-axisymmetric perturbation
\begin{equation} \label{eq:g}
    g = \frac{1}{q(x, y)^2} \biggl(
        \frac{dx^2}{A(x)} + \frac{dy^2}{B(y)}
        + A(x) \, ds^2 + B(y) \, dt^2 + 2 \sqrt{A(x) B(y)} \cos \theta \, ds \, dt
    \biggr) \tag{\(\star\star\)}
\end{equation}
of the metric \eqref{eq:g.f.t},
in which the torus fibers
are placed at a constant angle \(0 < \theta < \pi/2\);
it would be interesting to understand Ricci solitons
where \(\theta\) varies in the base \((x, y)\).
The metric \eqref{eq:g}
recovers the axisymmetric metric \eqref{eq:g.f.t} when \(\theta\) is identically \(\pi/2\).
In \cref{thm:main},
we show a rigidity result
for non-axisymmetric Ricci solitons of the form \eqref{eq:g},
which requires that \(A\) and \(B\) be constants
in addition to the same system of differential equations
\labelcref{eq:diff.eq.f.t.1,eq:diff.eq.f.t.2,eq:diff.eq.f.t.3}
arising from the metric \eqref{eq:g.f.t}.
Here, the subscripts on \(q\) denote partial derivatives,
and function arguments are omitted for brevity.

\begin{theorem} \label{thm:main}
    A non-axisymmetric metric \eqref{eq:g}
    is a Ricci soliton if and only if \(A\) and \(B\) are constants
    and there are functions
    \(S^x(x, y)\) and \(S^y(x, y)\) satisfying the system of equations
    \begin{gather}
        \partial_y S^x + \partial_x S^y = 4 q_x q_y,
        \qquad
        \partial_x S^x = 2 q_x^2,
        \qquad
        \partial_y S^y = 2 q_y^2, \label{eq:diff.eq.S} \\
        \lambda = q_{xx} A + q_{yy} B
            - \frac{q_x^2 A + q_y^2 B}{q}
            - \frac{S^x q_x A + S^y q_y B}{q^2}.
            \label{eq:diff.eq.lambda}
    \end{gather}
\end{theorem}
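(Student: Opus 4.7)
The plan is to write the Ricci soliton equation $\Ric(g) + \frac{1}{2}\Lie_V g = \lambda g$ in coordinates and analyze it component by component. Since $\partial_s,\partial_t$ are Killing fields for \eqref{eq:g}, I may take the soliton vector field to be torus-invariant; for a gradient soliton $V = \nabla f$ with $f = f(x,y)$, the block-diagonal structure of $g$ between the base $(x,y)$ and fiber $(s,t)$ forces the $\partial_s,\partial_t$ components of $V$ to vanish, so $V = V^x(x,y)\partial_x + V^y(x,y)\partial_y$.

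To compute $\Ric(g)$ I write $g = q^{-2} h$ with
\[
h = \frac{dx^2}{A} + \frac{dy^2}{B} + A\,ds^2 + B\,dt^2 + 2\sqrt{AB}\cos\theta\,ds\,dt,
\]
and compute $\Ric(h)$ directly from its Christoffel symbols---feasible because $h$ is block diagonal between base and fiber and the fiber metric depends on the base only through $A(x)$ and $B(y)$---then apply the conformal transformation formula in dimension four to obtain $\Ric(g)$.

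The rigidity comes from the $(s,t)$-component of the soliton equation. The mixed components $R_{xs}, R_{xt}, R_{ys}, R_{yt}$ all vanish thanks to the metric's symmetry under $(s,t)\mapsto(-s,-t)$, so they impose no constraint. When $\theta = \pi/2$ both $g_{st}$ and $R_{st}$ vanish identically. For $0 < \theta < \pi/2$, in contrast, $R_{st}$ is proportional to $\cos\theta$ and carries nontrivial terms involving $A', A'', B', B''$, while $(\Lie_V g)_{st} = V^x\partial_x g_{st} + V^y\partial_y g_{st}$ is likewise proportional to $\cos\theta$ through $g_{st}$. After dividing by $\cos\theta/q^2$ and using the diagonal components of the soliton equation to eliminate $V^x, V^y$, the resulting relation should separate into a term depending on $x$ alone and a term depending on $y$ alone, forcing $A' \equiv 0$ and $B' \equiv 0$. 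Establishing this separation cleanly is the main obstacle.

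Once $A$ and $B$ are constants, a linear change of the fiber coordinates $(s,t)$ involving $\cos\theta$ and $\sin\theta$ carries \eqref{eq:g} to the axisymmetric form \eqref{eq:g.f.t}, so the remaining analysis reduces to that of \cite{FT24} and yields \eqref{eq:diff.eq.S}--\eqref{eq:diff.eq.lambda} after identifying $S^x, S^y$ with rescaled components of $V$. The converse direction follows by reversing this reduction to produce an explicit soliton vector field from $(q, S^x, S^y)$.
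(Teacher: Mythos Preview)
Your outline locates the rigidity in the right place---the fiber block of the soliton equation---but the mechanism you propose for extracting it has a genuine gap. You describe a single $(s,t)$-equation, plan to eliminate $V^x,V^y$ using the diagonal components, and hope the result ``separates into a term depending on $x$ alone and a term depending on $y$ alone.'' You yourself flag this as the main obstacle, and indeed it is not how the obstruction actually appears. The clean route is to pass to the $(1,1)$-tensor $\Lambda \coloneqq g^{-1}\bigl(\Ric(g)+\tfrac12\Lie_V g\bigr)$ and observe that, because the fiber block of $g$ is \emph{non}-diagonal when $0<\theta<\pi/2$, the two conditions ${\Lambda^s}_t=0$ and ${\Lambda^t}_s=0$ are \emph{distinct} linear combinations of the symmetric $(ss),(st),(tt)$ equations. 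Their difference kills the $V$-terms and all $q$-dependence outright and leaves
\[
\frac{(A')^2}{A}+\frac{(B')^2}{B}=0,
\]
which forces $A'=B'=0$ by positivity of $A,B$---a sum-of-squares argument, not a separation-of-variables one. Without seeing that the non-diagonal fiber metric gives you two independent off-diagonal equations rather than one, the elimination you sketch becomes entangled with $q$ and does not obviously close.

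A second, smaller gap: your reduction to $V=V^x(x,y)\,\partial_x+V^y(x,y)\,\partial_y$ is argued only for gradient solitons, whereas the statement concerns arbitrary soliton vector fields. Torus-invariance alone gives $V^s,V^t$ depending on $(x,y)$; one still needs the mixed $(x,s),(x,t),(y,s),(y,t)$ soliton equations (or an averaging argument) to conclude that $V^s,V^t$ are constant and hence Killing, so can be discarded. On the positive side, your observation that once $A,B$ are constant a linear change of $(s,t)$ diagonalizes the fiber and reduces \eqref{eq:g} to \eqref{eq:g.f.t} is a tidy shortcut for the remaining analysis; the paper instead re-derives \eqref{eq:diff.eq.S}--\eqref{eq:diff.eq.lambda} directly with $A'=B'=0$.
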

In particular, any such metric is conformally flat.
It is only in the axisymmetric case \(\theta = \pi/2\)
that there are many Ricci solitons,
which are therefore rigid with respect to constant-angle non-axisymmetric perturbations.
\Cref{thm:main} leads to an explicit description of Einstein metrics of the form \eqref{eq:g},
which are characterized by the Einstein metric equation \(\Ric(g) = \lambda g\).

\begin{corollary} \label{cor:einstein}
    A non-axisymmetric metric \eqref{eq:g}
    is an Einstein metric if and only if \(A\) and \(B\) are constants
    and \(q(x, y) = ax + by + c\) for some constants \(a\), \(b\), and \(c\),
    with \(\lambda = -3 (a^2 A + b^2 B)\)
    in the Einstein metric equation.
\end{corollary}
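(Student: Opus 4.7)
The plan is to use \cref{thm:main} to reduce to the case where $A$ and $B$ are constants, and then exploit conformal flatness to turn the Einstein equation into a pointwise obstruction on $q$. Since every Einstein metric is a Ricci soliton (take $V = 0$), \cref{thm:main} immediately forces $A$ and $B$ to be constants, so the metric can be written as $g = g_0/q^2$, where
\[ g_0 = \tfrac{1}{A}\,dx^2 + \tfrac{1}{B}\,dy^2 + A\,ds^2 + B\,dt^2 + 2\sqrt{AB}\cos\theta\,ds\,dt \]
has constant coefficients and is therefore flat.

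Next I will apply the conformal change formula for the Ricci tensor in dimension four to $g = e^{2u}g_0$ with $u = -\ln q$. Since $\Ric(g_0) = 0$, a short computation yields
\[ \Ric(g) = \frac{2\,\nabla^2 q}{q} + \frac{\Delta q}{q}\,g_0 - \frac{3\,|dq|^2}{q^2}\,g_0, \]
where Hessian, Laplacian, and norm are all taken with respect to $g_0$. Substituting into $\Ric(g) = \lambda g = (\lambda/q^2)\,g_0$ and clearing denominators,
\[ 2q\,\nabla^2 q = \bigl(\lambda + 3|dq|^2 - q\,\Delta q\bigr)\,g_0. \]

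The decisive step is an off-diagonal obstruction, paralleling the one that drives \cref{thm:main}. Because $q$ depends only on $(x,y)$ and the Christoffel symbols of the constant-coefficient $g_0$ vanish in these coordinates, $\nabla^2 q$ is supported in the $(x,y)$-block; however, $(g_0)_{st} = \sqrt{AB}\cos\theta$ is nonzero thanks to $0 < \theta < \pi/2$. Comparing $ds\,dt$ entries forces the scalar factor to vanish, $\lambda + 3|dq|^2 - q\Delta q = 0$, after which the remaining entries yield $\nabla^2 q = 0$. In these flat coordinates this reduces to $q_{xx} = q_{xy} = q_{yy} = 0$, so $q = ax + by + c$ for constants $a$, $b$, $c$.

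Finally, with $q$ linear, $\Delta q = 0$, and using that the $(x,y)$-block of $g_0^{-1}$ is $\mathrm{diag}(A,B)$, one gets $|dq|^2 = a^2 A + b^2 B$. The scalar equation then gives $\lambda = -3(a^2 A + b^2 B)$. The converse is a direct verification reversing this computation. I expect the only real subtlety to be bookkeeping of which components of $g_0$ and $\nabla^2 q$ are nonzero; the rigidity of $q$ comes precisely from the off-diagonal fiber term, which vanishes in the axisymmetric limit $\theta = \pi/2$ and is what distinguishes the non-axisymmetric setting from the Firester--Tsiamis analysis.
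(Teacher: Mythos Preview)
Your proof is correct but takes a genuinely different route from the paper's. The paper applies \cref{thm:main} in full: with $V=0$ one has $S^x=2qq_x$, $S^y=2qq_y$, and substituting these into \eqref{eq:diff.eq.S} immediately yields $q_{xx}=q_{xy}=q_{yy}=0$; then \eqref{eq:diff.eq.lambda} gives the value of $\lambda$. You instead use \cref{thm:main} only to force $A,B$ constant, then start over from the Einstein equation via the conformal change formula for $g=q^{-2}g_0$ with $g_0$ flat. Your argument is more geometric and makes the role of conformal flatness explicit, while the paper's is a direct algebraic specialization of the system already derived; the paper's version is shorter because it reuses computations done in proving \cref{thm:main}, whereas yours would work as a self-contained argument even without the $S^x,S^y$ machinery. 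One small remark: the off-diagonal $(s,t)$ entry is not actually what forces the scalar factor to vanish---the diagonal fiber entries $(s,s)$ and $(t,t)$ already do this, since $(\nabla^2 q)_{ss}=0$ but $(g_0)_{ss}=A\neq 0$---so the non-axisymmetric hypothesis has done its work in \cref{thm:main} and is not needed again at this stage. This does not affect the correctness of your argument, only the narrative about which component is ``decisive.''
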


Under the conformally cylindrical ansatz,
the additional constraint that \(A\) and \(B\) are constants
narrows down the classification of Ricci solitons of the form \eqref{eq:g}.

\begin{corollary} \label{cor:cc}
    A non-axisymmetric metric \eqref{eq:g}
    whose base metric \eqref{eq:g.base} is conformally cylindrical
    is one of
    \begin{enumerate}
    \item an Einstein metric as described in \cref{cor:einstein};
    \item a product of two Ricci soliton surfaces
        with the same constant \(\lambda\)
        defined in the Ricci soliton equation; or
    \item a product of a flat surface with an Einstein surface, given by
        \[
            g = \frac{1}{f(x)^2} \biggl(
                \frac{dx^2}{A} + \frac{dy^2}{B} + A \, ds^2 + B \, dt^2
            \biggr)
        \]
        up to translating, rescaling, and swapping \(x\) and \(y\),
        where \(f \in \{\cosh, \sinh, \sin\}\),
        and \(A\) and \(B\) are constants.
    \end{enumerate}
\end{corollary}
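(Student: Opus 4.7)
The plan is to combine \cref{thm:main} with the Firester--Tsiamis classification of conformally cylindrical axisymmetric toric Ricci solitons summarized in \cref{sec:f.t}. By \cref{thm:main}, any non-axisymmetric Ricci soliton of the form \eqref{eq:g} must have $A$ and $B$ constant and satisfy the same system \labelcref{eq:diff.eq.S,eq:diff.eq.lambda} as in the axisymmetric setting. The classification under the conformally cylindrical ansatz therefore reduces to identifying which Firester--Tsiamis axisymmetric Ricci solitons are compatible with $A$ and $B$ both constant.

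First, I would enumerate the families arising from the Firester--Tsiamis ODE reduction under the conformally cylindrical ansatz and impose the constraint $A, B$ constant on each. For the Einstein family, \cref{cor:einstein} yields $q = ax + by + c$ and gives Case~(1). For product families in which the total metric splits (possibly as a warped product) into two two-dimensional factors, each factor must be a Ricci soliton in its own right with constant $A$ or $B$, and the two soliton constants necessarily agree with the single $\lambda$ of the Ricci soliton equation on the total space; this yields Case~(2).

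A distinguished sub-case is when one of the two factors is flat. Without loss of generality, suppose $q$ depends only on $x$ and write $q = f(x)$, so that the $(y,t)$ part is flat and the $(x,s)$ part $\tfrac{1}{f(x)^2}(\tfrac{dx^2}{A} + A\, ds^2)$ must be Einstein, which in dimension two means constant Gaussian curvature. The constant-curvature condition reduces to the ODE $f f'' - (f')^2 = \kappa / A$ for a constant $\kappa$; integrating it according to the sign and magnitude of $\kappa$ gives $f \in \{\cosh, \sinh, \sin\}$ up to translating and rescaling, producing Case~(3).

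The main obstacle would be establishing exhaustivity: showing that no Firester--Tsiamis family other than these three types admits solutions with both $A$ and $B$ constant. This step requires a systematic sweep of the Firester--Tsiamis ODE-reduced classification, in particular verifying that non-Einstein, non-product families (such as any Kähler-type solitons in their list) are forced to have at least one of $A$, $B$ non-constant, hence are excluded by \cref{thm:main}.
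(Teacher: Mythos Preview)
Your overall strategy---reduce via \cref{thm:main} to the Firester--Tsiamis classification in \cref{thm:f.t.1.1.1} and then isolate the families compatible with constant $A$ and $B$---is exactly the paper's. The exhaustivity step you flag as the ``main obstacle'' is in fact immediate once one inspects the explicit forms in \cref{thm:f.t.1.1.1}: cases~(4) and~(5) carry explicit non-constant $A$ and $B$ (the quadratics $a_0-x^2$, $b_0+y^2$ in case~(4), the functions $F_{\alpha,c,k}$ in case~(5)), so only cases~(1)--(3) survive. No systematic sweep is needed.

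The genuine gap is in your handling of case~(3). You write ``without loss of generality, suppose $q$ depends only on $x$,'' but this is exactly what has to be proved, and it is not a symmetry reduction. The paper begins with a general $q=q(x,y)$, imposes flatness on the $(y,t)$ factor to obtain $q q_{yy}-q_y^2=0$, whose general solution is $q(x,y)=h(x)\,e^{y f(x)}$ for arbitrary functions $h$ and $f$, and then imposes the Einstein condition $q q_{xx}-q_x^2=\lambda/A$ on the $(x,s)$ factor. Substituting the form of $q$ and differentiating the resulting identity twice in $y$ forces $f\equiv 0$. Only after this does one arrive at the single-variable ODE $h h''-(h')^2=\lambda/A$, whose solutions are $\cosh$, $\sinh$, $\sin$ up to translating and rescaling. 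This reduction from a two-variable $q$ to a one-variable conformal factor is the main computational content of the paper's proof, and your proposal assumes its conclusion rather than deriving it.
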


\section{The axisymmetric case} \label{sec:f.t}

In Theorem~1.1 in \cite{FT24},
Firester and Tsiamis classified Ricci solitons of the form \eqref{eq:g.f.t}
whose base metric \eqref{eq:g.base} is conformally cylindrical.
We summarize this classification in \cref{thm:f.t.1.1.1}.
Recall that the base metric \eqref{eq:g.base} being conformally cylindrical
means that the function \(q\) is homogeneous of degree \(1\).

\begin{theorem}[Firester--Tsiamis] \label{thm:f.t.1.1.1}
    When the base metric \eqref{eq:g.base} is conformally cylindrical,
    the metric \eqref{eq:g.f.t} is one of
    \begin{enumerate}
    \item an Einstein metric, specifically the Riemannian Schwarzschild metric
        \begin{equation} \label{eq:g.schwarzschild}
            g = \frac{1}{x^2} \biggl(
                \frac{dx^2}{A(x)} + \frac{dy^2}{B(y)}
                + A(x) \, ds^2 + B(y) \, dt^2
            \biggr)
        \end{equation}
        for some cubic \(A\) and quadratic \(B\),
        possibly degenerate,
        or the Plebański--Demiański metric
        \begin{equation} \label{eq:g.p.d}
            g = \frac{1}{(x \pm y)^2} \biggl(
                \frac{dx^2}{a_0 - P(x)} + \frac{dy^2}{b_0 + P(y)}
                + (a_0 - P(x)) \, ds^2 + (b_0 + P(y)) \, dt^2
            \biggr)
        \end{equation}
        for some constants \(a_0\) and \(b_0\)
        and possibly degenerate cubic \(P\),
        up to translating, rescaling, and swapping \(x\) and \(y\);
    \item a product of two Ricci soliton surfaces
        with the same constant \(\lambda\)
        defined in the Ricci soliton equation;
    \item a product of a flat surface with an Einstein surface;
    \item a metric locally conformal to \(\Sphere^2 \times \HH^2\) or \(\RR^4\),
        given explicitly by
        \[
            g = \frac{1}{y^2 f(x/y)^2} \biggl(
                \frac{dx^2}{a_0 - x^2} + \frac{dy^2}{b_0 + y^2}
                + (a_0 - x^2) \, ds^2 + (b_0 + y^2) \, dt^2
            \biggr)
        \]
        for some constants \(a_0\) and \(b_0\)
        and solution \(f(z)\) to the ordinary differential equation
        \begin{align*}
            0 = {}
            & {-3} (a_0 (f')^2 + b_0 (f - z f')^2)^2 
                - \lambda (a_0 (f')^2 + b_0 (f - z f')^2) \nonumber \\
            & + (4 a_0 - b_0 z^2) b_0 f^3 f''
                + (a_0 + b_0 z^2)^2 f f''((f')^2 - f f'')
                + \lambda (a_0 + b_0 z^2) f f'' \nonumber \\
            & - (a_0 + b_0 z^2) (b_0 z f - (a_0 + b_0 z^2) f') f^2 f^{(3)};
        \end{align*}
        or
    \item a steady singular Ricci soliton given by
        \[
            g = \frac{1}{x^{2 \alpha} y^{2(1 - \alpha)}} \biggl(
                  \frac{dx^2}{F_{\alpha, c, k_1}(x)}
                + \frac{dy^2}{F_{1 - \alpha, -c, k_2}(y)}
                + F_{\alpha, c, k_1}(x) \, ds^2
                + F_{1 - \alpha, -c, k_2}(y) \, dt^2
            \biggr)
        \]
        where
        \[
            F_{\alpha, c, k}(t) \coloneqq c t^2 + k t^{\frac{2 \alpha^2}{2 \alpha - 1} + 1}
        \]
        for some constants \(c\), \(k_1\), \(k_2\), and \(\alpha \neq 1/2\).
    \end{enumerate}
\end{theorem}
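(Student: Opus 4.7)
The plan is to expand the Ricci soliton equation for the block-diagonal metric \eqref{eq:g.f.t} into scalar equations, use the conformally cylindrical hypothesis to reduce partial differential equations to ordinary ones, and then branch on the structural form of $q$, $A$, and $B$. I would write $g = q^{-2} \hat g$ with
\[
\hat g = \frac{dx^2}{A(x)} + \frac{dy^2}{B(y)} + A(x)\, ds^2 + B(y)\, dt^2,
\]
a product of two warped surfaces, and apply the conformal transformation formula for $\Ric$ to express every component of $\Ric(g)$ in terms of $\Ric(\hat g)$ together with the Hessian and gradient norm of $q$ computed in $\hat g$. Because $\hat g$ separates, $\Ric(\hat g)$ has no mixed $dx\,dy$ component, so the off-diagonal soliton equation collapses to a single scalar relation among $q_{xy}$, $q_x q_y$, $A$, $B$, and the torus-invariant soliton field $V = V^x(x,y)\partial_x + V^y(x,y)\partial_y$ (translations along $s,t$ are isometries, so constant $s,t$-components are absorbed).

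The diagonal components produce four further equations. The $(ss)$ and $(tt)$ equations are the most constraining: since neither $q$ nor $V$ carries $s,t$-dependence, they are polynomial in $A, A', A''$ and $B, B', B''$ with coefficients rational in $q, q_x, q_y$. Separation of variables in these two equations forces $A$ and $B$ to be polynomials of low degree (at most three) in their respective variables, coupled through a handful of shared constants. The remaining two diagonal equations together with the off-diagonal one then fix the soliton field $V$ and the constant $\lambda$.

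The conformally cylindrical hypothesis---$q$ homogeneous of degree one, so $xq_x + yq_y = q$ by Euler---allows introducing a scaling variable $z = x/y$ and writing $q(x,y) = y\, h(z)$ on the chart $y>0$. This collapses $q$ to a one-variable function, and the classification then proceeds by branching on $h$. When $h$ is affine, so $q = ax+by+c$, the mixed equation becomes an identity, $V$ vanishes, and the diagonal equations integrate to the Einstein branch, recovering the Schwarzschild and Plebański--Demiański metrics after rescaling and translating (case 1). When $q$ depends on only one of $x,y$, the metric factors as a Riemannian product and cases 2 and 3 follow from solving the surface soliton equations on each factor. In the generic non-affine regime with neither $q_x$ nor $q_y$ vanishing, separation forces $A = a_0 - x^2$ and $B = b_0 + y^2$, and the remaining constraint becomes the displayed third-order ODE for $f$ (case 4). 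Case 5 corresponds to the scaling ansatz $q = x^\alpha y^{1-\alpha}$, which is homogeneous of degree one for each $\alpha \ne 1/2$; substituting and imposing $\lambda = 0$ integrates the reduced system explicitly, producing the profiles $F_{\alpha, c, k}$ together with a nontrivial steady soliton field.

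The main obstacle is exhaustiveness: showing that every solution falls into one of the five listed branches. This requires combining the polynomial constraints on $A,B$ coming from the $(ss),(tt)$ equations with the mixed $(xy)$ constraint under the homogeneity hypothesis, and verifying that every degenerate sub-case either collapses onto a product or specialises a listed branch. The third-order ODE in case 4 is a further subtlety: one must argue that its solution space genuinely parametrises a new family of solitons, without hidden identifications reducing it to any of the other cases.
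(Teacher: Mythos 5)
The first thing to say is that this paper does not prove \cref{thm:f.t.1.1.1} at all: it is imported verbatim from Theorem~1.1 of \cite{FT24}, and the only argument the paper reproduces is the proof of the reduction step \cref{lem:f.t.2.4}, which converts the soliton equation into the system \labelcref{eq:diff.eq.f.t.1,eq:diff.eq.f.t.2,eq:diff.eq.f.t.3} via the auxiliary functions \eqref{eq:s.f.t}. So there is no in-paper proof to match your outline against, and your proposal must be judged on its own terms. The opening computation --- writing \(g = q^{-2}\hat g\), using the conformal transformation of \(\Ric\), and invoking toric invariance to reduce \(V\) to \(V^x\partial_x + V^y\partial_y\) --- is sound and equivalent to the paper's direct computation plus \cref{lem:f.t.2.3}. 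The classification half is where the problems are.

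Two concrete gaps. First, your claim that the \((ss)\) and \((tt)\) equations ``force \(A\) and \(B\) to be polynomials of low degree (at most three)'' is contradicted by case~(5) of the very statement being proved: there \(A(x) = F_{\alpha,c,k_1}(x) = cx^2 + k_1 x^{2\alpha^2/(2\alpha-1)+1}\), which is not a polynomial for generic \(\alpha\). The degree-three bound is only valid in the Einstein sub-case \(V=0\); for a genuine soliton the first-order terms \(-\frac{S^x}{q^2}A'\) and \(-\frac{S^y}{q^2}B'\) appearing in \eqref{eq:diff.eq.f.t.2} destroy any such bound, and an argument built on it cannot produce branches (4) and (5). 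Second, your case division (affine \(q\); one-variable \(q\); ``generic'' \(q\) forcing \(A = a_0 - x^2\), \(B = b_0 + y^2\); the ansatz \(q = x^\alpha y^{1-\alpha}\)) is neither disjoint nor exhaustive: \(q = x^\alpha y^{1-\alpha}\) is itself non-affine with both partials nonzero, yet lands in case (5) with entirely different \(A\) and \(B\) from your ``generic'' branch, and you give no criterion separating cases (4) and (5) nor any argument that nothing else occurs. Relatedly, ``\(q\) affine \(\Rightarrow\) the mixed equation is an identity \(\Rightarrow V\) vanishes'' is a non sequitur: affine \(q\) makes \(4q_xq_y\) constant but does not force \(S^x = 2qq_x\), i.e., does not force \(V=0\). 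You correctly flag exhaustiveness as ``the main obstacle,'' but that obstacle is precisely the content of the theorem, and the proposal does not overcome it; the workable route is to integrate \eqref{eq:diff.eq.f.t.1} using homogeneity of \(q\) and branch on the separation function \(w\) of \eqref{eq:diff.eq.f.t.2}, as in \cite{FT24}.
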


To establish the classification in \cref{thm:f.t.1.1.1},
Firester and Tsiamis introduced auxiliary functions \eqref{eq:s.f.t}
and reduced the Ricci soliton equation
into the system of differential equations
\labelcref{eq:diff.eq.f.t.1,eq:diff.eq.f.t.2,eq:diff.eq.f.t.3}.
This result is stated in Corollary~{2.4} of \cite{FT24}.
Based on this result, they specialized to the case
where the base metric \eqref{eq:g.base} is conformally cylindrical
and further reduced the Ricci soliton equation
to an ordinary differential equation.

\begin{theorem}[Firester--Tsiamis] \label{lem:f.t.2.4}
    When the base metric \eqref{eq:g.base} is conformally cylindrical,
    the metric \eqref{eq:g.f.t} is a Ricci soliton if and only if
    there are functions
    \(S^x(x, y)\) and \(S^y(x, y)\) satisfying the system of differential equations
    \begin{gather}
        \partial_y S^x + \partial_x S^y = 4 q_x q_y,
        \qquad
        \partial_x S^x = 2 q_x^2,
        \qquad
        \partial_y S^y = 2 q_y^2,
            \label{eq:diff.eq.f.t.1} \\
        w \coloneqq A'' - \frac{S^x}{q^2} A' = B'' - \frac{S^y}{q^2} B',
            \label{eq:diff.eq.f.t.2} \\
        - \frac{\lambda}{q}
            = \frac{1}{2} w
            - \frac{q_x A' + q_{xx} A + q_y B' + q_{yy} B}{q}
            + \frac{q_x^2 A + q_y^2 B}{q^2}
            + \frac{S^x q_x A + S^y q_y B}{q^3}
            \label{eq:diff.eq.f.t.3}.
    \end{gather}
\end{theorem}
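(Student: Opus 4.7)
I would prove the equivalence by translating the Ricci soliton equation $\Ric(g) + \tfrac{1}{2}\Lie_V g = \lambda g$ for the diagonal metric \eqref{eq:g.f.t} into scalar component equations and then identifying $S^x, S^y$ as the auxiliary functions that absorb the $V$-dependence. Since $\partial_s$ and $\partial_t$ are Killing fields for \eqref{eq:g.f.t}, any components of $V$ in these directions contribute nothing to the soliton equation, and I may take $V = V^x(x,y)\,\partial_x + V^y(x,y)\,\partial_y$ without loss of generality.

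\textbf{Component equations.} The torus symmetry of \eqref{eq:g.f.t} forces the mixed Ricci components $R_{is}, R_{it}, R_{st}$ to vanish, and the same holds for $\Lie_V g$ since $V$ has no $(s,t)$-part; this leaves the five scalar equations indexed by $(x,x), (y,y), (x,y), (s,s), (t,t)$. I would compute $\Ric(g)$ by treating \eqref{eq:g.f.t} as a doubly warped product conformally rescaled by $1/q^2$: the conformal factor contributes standard terms involving $q$ and its first two derivatives, while the warping in $A(x)$ and $B(y)$ contributes to the $(s,s)$ and $(t,t)$ blocks together with the obvious diagonal pieces.

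\textbf{Identification of $S^x, S^y$ and derivation of \labelcref{eq:diff.eq.f.t.1,eq:diff.eq.f.t.2,eq:diff.eq.f.t.3}.} After expanding $\Lie_V g_{ss} = V^x A'/q^2 - 2A(V^x q_x + V^y q_y)/q^3$ (and analogously for $(t,t)$), the $(s,s)$ equation can be rearranged into the shape
\begin{equation*}
    A'' - \frac{S^x(x,y)}{q^2}\, A' = w(x,y),
\end{equation*}
where $S^x$ is defined so that $S^x/q^2$ is precisely the coefficient of $A'$, and $w$ collects the remaining $A$-independent expression. An analogous rearrangement of the $(t,t)$ equation produces $B'' - (S^y/q^2)\, B' = w$ with the \emph{same} function $w$, which is the equality in \eqref{eq:diff.eq.f.t.2}. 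The three equations of \eqref{eq:diff.eq.f.t.1} then emerge from the $(x,x), (y,y), (x,y)$ components of the soliton equation, using the conformally cylindrical condition $x q_x + y q_y = q$ to cancel terms arising from the Hessian of $q$. Substituting the identifications back into the $(x,x)$ equation yields the $\lambda$-identity \eqref{eq:diff.eq.f.t.3}.

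\textbf{Main obstacle.} The principal difficulty is algebraic bookkeeping: the Ricci components of \eqref{eq:g.f.t} contain many terms from the conformal factor $1/q^2$ and from the warping, and these must be organized so that the $A$-dependence separates cleanly from the $B$-dependence into the form \eqref{eq:diff.eq.f.t.2}. The conceptual check is that the overdetermined system \eqref{eq:diff.eq.f.t.1} is solvable: its compatibility condition reduces to $q_{xx} q_{yy} = q_{xy}^2$, which follows automatically from Euler's relation $x q_x + y q_y = q$ by differentiating in $x$ and $y$ to obtain $x q_{xx} + y q_{xy} = 0$ and $x q_{xy} + y q_{yy} = 0$. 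The converse direction — that any solution $(S^x, S^y, w)$ of the system yields a Ricci soliton — is obtained by reading off $V$ from $S^x, S^y$ and running the same computation in reverse.
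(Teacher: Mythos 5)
Your overall strategy coincides with the paper's proof of \cref{lem:f.t.2.4}: reduce $V$ to the form $V^x(x,y)\,\partial_x + V^y(x,y)\,\partial_y$, compute the components of $\Ric(g)$ and $\Lie_V g$ (only the $(x,y)$ and $(y,x)$ off-diagonal entries survive), absorb the $V$-dependence into the auxiliary functions $S^x = 2qq_x + V^x/A$ and $S^y = 2qq_y + V^y/B$, and read the system \labelcref{eq:diff.eq.f.t.1,eq:diff.eq.f.t.2,eq:diff.eq.f.t.3} off the component equations of $\Lambda = \lambda I$. Two steps need repair, however. First, your justification for restricting $V$ is incorrect as stated: the fact that $\partial_s$ and $\partial_t$ are Killing does not mean that $(s,t)$-components of $V$ ``contribute nothing,'' since $\Lie_{f(x,y)\,\partial_s}\, g$ has nonzero $(x,s)$ and $(y,s)$ entries whenever $f$ is non-constant, and you must also rule out $(s,t)$-dependence of $V^x$ and $V^y$. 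This reduction is a genuine lemma (\cref{lem:f.t.2.3}, Claim~2.3 of \cite{FT24}); the argument is that $\Ric$ and $g$ are both toric, hence so is $\Lie_V g = 2(\lambda g - \Ric)$, and $V$ may then be replaced by its average over a fundamental domain of the torus fibers.

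Second, you invoke the conformally cylindrical condition $x q_x + y q_y = q$ to cancel Hessian terms in deriving \eqref{eq:diff.eq.f.t.1}. The paper's derivation of the system \labelcref{eq:diff.eq.f.t.1,eq:diff.eq.f.t.2,eq:diff.eq.f.t.3} uses no homogeneity of $q$ at all — it is Corollary~2.4 of \cite{FT24}, valid for general $q$; the conformally cylindrical hypothesis only enters in the subsequent reduction to an ordinary differential equation. Your observation that the compatibility condition of the overdetermined system \eqref{eq:diff.eq.f.t.1} is $q_{xx} q_{yy} = q_{xy}^2$, which degree-one homogeneity of $q$ guarantees, is correct and a worthwhile consistency check, but it is not a step of the equivalence being proved, since the existence of $S^x$ and $S^y$ is itself part of the stated condition. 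A smaller bookkeeping point: in the paper, $\partial_x S^x = 2q_x^2$ and $\partial_y S^y = 2q_y^2$ arise from equating ${\Lambda^x}_x$ with ${\Lambda^s}_s$ and ${\Lambda^y}_y$ with ${\Lambda^t}_t$, equation \eqref{eq:diff.eq.f.t.2} from ${\Lambda^x}_x = {\Lambda^y}_y$, and \eqref{eq:diff.eq.f.t.3} from ${\Lambda^x}_x = \lambda$, rather than each reduced equation coming from a single component as you suggest; the resulting systems are equivalent, but the separation of the $A$- and $B$-dependence in \eqref{eq:diff.eq.f.t.2} is cleanest with the paper's pairing.
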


Firester and Tsiamis's proof of \cref{lem:f.t.2.4}
uses the following result,
stated and proved as Claim~2.3 in \cite{FT24}.

\begin{lemma}[Firester--Tsiamis] \label{lem:f.t.2.3}
    If the metric \eqref{eq:g.f.t} satisfies
    the Ricci soliton equation \(\Ric(g) + \frac{1}{2} \Lie_V g = \lambda g\),
    then there is some smooth vector field of the form
    \(\tilde{V} = \tilde{V}^x(x, y) \, \partial_x + \tilde{V}^y(x, y) \, \partial_y\)
    such that \(\Ric(g) + \frac{1}{2} \Lie_{\tilde{V}} g = \lambda g\).
\end{lemma}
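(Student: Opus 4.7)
The plan is to exploit the symmetries of $g$ in two stages: first averaging over the continuous $T^2$-action generated by $\partial_s$ and $\partial_t$ to eliminate the $(s, t)$-dependence of $V$, and then invoking the discrete reflection isometries $s \mapsto -s$ and $t \mapsto -t$ to kill off the $\partial_s$ and $\partial_t$ components.

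For the first stage, both $g$ and $\Ric(g)$ are $T^2$-invariant, so the right-hand side of the soliton equation $\tfrac{1}{2}\Lie_V g = \lambda g - \Ric(g)$ is $T^2$-invariant as well. I would average $V$ by
\[
    \bar V := \frac{1}{(2\pi)^2} \int_{T^2} \phi_{\sigma,\tau}^* V \, d\sigma \, d\tau,
\]
where $\phi_{\sigma,\tau}$ is the torus action. The result is a $T^2$-invariant vector field whose components depend only on $(x, y)$. Since pullback by the isometry $\phi_{\sigma,\tau}$ commutes with $\Lie$ and fixes $g$, and since the right-hand side is $T^2$-invariant, this $\bar V$ still solves the soliton equation.

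For the second stage, the reflections $s \mapsto -s$ and $t \mapsto -t$ are isometries that send $\partial_s \mapsto -\partial_s$ and $\partial_t \mapsto -\partial_t$. By isometry-invariance of $\Ric(g)$, any component $\Ric(\partial_i, \partial_j)$ with $i \neq j$ and at least one of $i, j$ in $\{s, t\}$ flips sign under an appropriate reflection, hence vanishes. So $\Ric(g)$ is diagonal in the $(x, y, s, t)$ frame, and the soliton equation forces $\Lie_{\bar V} g$ to be diagonal as well. Since $\bar V^i$ is independent of $(s, t)$ and $g$ is block diagonal, a direct computation yields
\[
    (\Lie_{\bar V} g)_{xs} = g_{ss} \, \partial_x \bar V^s, \qquad (\Lie_{\bar V} g)_{ys} = g_{ss} \, \partial_y \bar V^s,
\]
with analogous formulas for $\bar V^t$. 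The vanishing of these off-diagonal components then forces $\bar V^s$ and $\bar V^t$ to be constants, so $\bar V^s \partial_s + \bar V^t \partial_t$ is Killing and may be subtracted off: the vector field $\tilde V := \bar V^x \partial_x + \bar V^y \partial_y$ satisfies $\Lie_{\tilde V} g = \Lie_{\bar V} g$ and has the required form.

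The main obstacle is the averaging step, which requires the $T^2$-action to be compact --- a natural condition in the paper's ``toric'' setting. If one works only with the local coordinate expression, one can instead Fourier-decompose $V$ in $(s, t)$: since the right-hand side of the soliton equation has only its zero Fourier mode, the zero mode of $V$ alone already solves the equation, producing the same $\bar V$. Once one has a $T^2$-invariant solution, the reduction to a vector field tangent to the base is a routine symmetry calculation via the reflections, so the heart of the argument is recognizing that these two layers of symmetry together pin down the fiber components of $V$.
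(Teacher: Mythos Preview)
Your approach is correct and aligns with the paper's own treatment. The paper does not prove this particular lemma (it is cited from Firester--Tsiamis), but it gives a constructive proof of the non-axisymmetric analogue, \cref{lem:v}, using exactly your torus-averaging idea; your Stage~2 is a more careful justification of what the paper compresses into the one-line assertion that one may take $V^s = V^t = 0$ since the coefficients of $g$ depend only on $x$ and $y$. One small slip: after showing that the mixed base--fiber Ricci components vanish you conclude that $\Ric(g)$ is diagonal, but $\Ric_{xy}$ need not be zero---this does not affect the argument, since you only use the vanishing of the $(x,s)$, $(y,s)$, $(x,t)$, $(y,t)$ components of $\Lie_{\bar V} g$.
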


We now review Firester and Tsiamis's proof of \cref{lem:f.t.2.4},
which forms the basis of the proof of our rigidity result in \cref{sec:proof.main}.

\begin{proof}[Proof of \cref{lem:f.t.2.4}]
    The Ricci soliton equation
    can be rewritten as
    \begin{equation} \label{eq:ricci.soliton.f.t}
        \Lambda \coloneqq g^{-1} (\Ric(g) + \tfrac{1}{2} \Lie_V g) = \lambda I,
    \end{equation}
    where \(I\) denotes the identity tensor.
    The Ricci tensor of the metric \eqref{eq:g.f.t} is given by
    \begin{align*}
        \Ric_{xx} &= - \frac{1}{A} \biggl(
            \frac{A''}{2} - \frac{2 q_x A' + 3 q_{xx} A + q_y B' + q_{yy} B}{q}
            + \frac{3 (q_x^2 A + q_y^2 B)}{q^2}
        \biggr), \\
        \Ric_{xy} &= \frac{2 q_{xy}}{q}, \\
        \Ric_{ss} &= - A \biggl(
            \frac{A''}{2} - \frac{2 q_x A' + 3 q_{xx} A + q_y B' + q_{yy} B}{q}
            + \frac{3 (q_x^2 A + q_y^2 B)}{q^2}
        \biggr),
    \end{align*}
    and the symmetry \((x, A) \leftrightarrow (y, B)\),
    where all off-diagonal entries of \(\Ric\) are zero
    except \(\Ric_{xy}\) and \(\Ric_{yx}\).
    For the metric \eqref{eq:g.f.t},
    \cref{lem:f.t.2.3} allows us to assume without loss of generality that
    \(V = V^x(x, y) \, \partial_x + V^y(x, y) \, \partial_y\)
    for some functions \(V^x\) and \(V^y\).
    Then, the Lie derivative \(\Lie_V g\) is given by
    \begin{align*}
        (\Lie_V g)_{xx}
            &= \frac{2 (\partial_x V^x) A - V^x A'}{q^2 A^2}
            - \frac{2 (V^x q_x + V^y q_y)}{q^3 A}, \\
        (\Lie_V g)_{xy}
            &= \frac{(\partial_x V^y) A + (\partial_y V^x) B}{q^2 AB}, \\
        (\Lie_V g)_{ss}
            &= \frac{V^x A'}{q^2} - \frac{2 A (V^x q_x + V^y q_y)}{q^3},
    \end{align*}
    and the same symmetry \((x, A) \leftrightarrow (y, B)\),
    where all off-diagonal entries of \(\Lie_V g\) are zero
    except \((\Lie_V g)_{xy}\) and \((\Lie_V g)_{yx}\).
    The Ricci soliton equation \eqref{eq:ricci.soliton.f.t}
    is equivalent to the system of equations
    \({\Lambda^x}_y = {\Lambda^y}_x = 0\)
    and
    \({\Lambda^x}_x = {\Lambda^y}_y = {\Lambda^s}_s = {\Lambda^t}_t = \lambda\).
    To reduce these equations,
    we introduce the auxiliary functions
    \begin{equation} \label{eq:s.f.t}
        S^x \coloneqq 2 q q_x + \frac{V^x}{A}
        \qquad \text{and} \qquad
        S^y \coloneqq 2 q q_y + \frac{V^y}{B}.
    \end{equation}
    Calculation shows that
    all off-diagonal entries of \(\Lambda\) are identically zero
    except \({\Lambda^x}_y\) and \({\Lambda^y}_x\).
    Both \({\Lambda^x}_y = 0\) and \({\Lambda^y}_x = 0\) reduce to
    \(\partial_y S^x + \partial_x S^y = 4 q_x q_y\).
    Equating pairs of diagonal entries of \(\Lambda\),
    the equations \({\Lambda^x}_x = {\Lambda^s}_s\)
    and \({\Lambda^y}_y = {\Lambda^t}_t\)
    simplify to
    \(\partial_x S^x = 2 q_x^2\) and \(\partial_y S^y = 2 q_y^2\)
    respectively,
    while the equation \({\Lambda^x}_x = {\Lambda^y}_y\) reduces to
    \[
        w(x, y) \coloneqq A'' - \frac{S^x}{q^2} A' = B'' - \frac{S^y}{q^2} B'.
    \]
    Finally, the equation \({\Lambda^x}_x = \lambda\) simplifies to
    \[
        - \frac{\lambda}{q}
            = \frac{1}{2} w
            - \frac{q_x A' + q_{xx} A + q_y B' + q_{yy} B}{q}
            + \frac{q_x^2 A + q_y^2 B}{q^2}
            + \frac{S^x q_x A + S^y q_y B}{q^3},
    \]
    thereby reducing the Ricci soliton equation \eqref{eq:ricci.soliton.f.t}
    to the system of differential equations
    \labelcref{eq:diff.eq.f.t.1,eq:diff.eq.f.t.2,eq:diff.eq.f.t.3}.
\end{proof}

\section{The non-axisymmetric case} \label{sec:proof}

\subsection{Rigidity result} \label{sec:proof.main}
\cref{thm:main} shows that
Ricci solitons of the form \eqref{eq:g}
are constrained by an obstacle
arising from the \({\Lambda^s}_t = 0\) component
of the Ricci soliton equation \eqref{eq:ricci.soliton}.
This obstacle disappears when \(\theta = \pi/2\),
allowing a much larger family of Ricci solitons.
To prove \cref{thm:main},
we require an analogue of \cref{lem:f.t.2.3} for the metric \eqref{eq:g}.
The same proof that Firester and Tsiamis provided
for Claim~2.3 in \cite{FT24} also applies to this analogue; we provide a different constructive proof.

\begin{lemma} \label{lem:v}
    If the metric \eqref{eq:g} satisfies
    the Ricci soliton equation \(\Ric(g) + \frac{1}{2} \Lie_V g = \lambda g\),
    then there is some smooth vector field of the form
    \(\tilde{V} = \tilde{V}^x(x, y) \, \partial_x + \tilde{V}^y(x, y) \, \partial_y\)
    such that \(\Ric(g) + \frac{1}{2} \Lie_{\tilde{V}} g = \lambda g\).
\end{lemma}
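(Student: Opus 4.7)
The plan is to exploit the $T^2$-symmetry of \eqref{eq:g}: since the metric coefficients depend only on $(x, y)$, the fields $\partial_s$ and $\partial_t$ are Killing and generate an isometric torus action. First, I would replace $V$ by its average $\bar V$ over this $T^2$-action. Since $g$ and $\Ric(g)$ are both torus-invariant, so is $\Lie_V g = 2 \lambda g - 2 \Ric(g)$, and the standard averaging identity $\Lie_{\bar V} g = \Lie_V g$ then shows that $\bar V$ still satisfies the Ricci soliton equation. By construction, each coordinate component $\bar V^x, \bar V^y, \bar V^s, \bar V^t$ depends only on $(x, y)$.

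Next, I would use the discrete isometry $\sigma \colon (x, y, s, t) \mapsto (x, y, -s, -t)$ of \eqref{eq:g}, which preserves the metric because the cross term $\cos\theta\, ds\, dt$ is even under $(s, t) \mapsto (-s, -t)$. Since $\sigma$ acts by $-1$ on $\partial_s, \partial_t$ and by $+1$ on $\partial_x, \partial_y$, its invariance forces the mixed Ricci components
\[
\Ric_{xs} = \Ric_{xt} = \Ric_{ys} = \Ric_{yt} = 0
\]
to vanish. Combined with $g_{xs} = g_{xt} = g_{ys} = g_{yt} = 0$, the corresponding four components of the Ricci soliton equation reduce to $(\Lie_{\bar V} g)_{xs} = (\Lie_{\bar V} g)_{xt} = (\Lie_{\bar V} g)_{ys} = (\Lie_{\bar V} g)_{yt} = 0$.

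Expanding these Lie derivative components using the torus invariance of $\bar V$ yields the linear system
\[
A \, \partial_x \bar V^s + \sqrt{AB}\cos\theta \, \partial_x \bar V^t = 0, \qquad \sqrt{AB}\cos\theta \, \partial_x \bar V^s + B \, \partial_x \bar V^t = 0,
\]
together with the analogous system for $y$-derivatives. The coefficient matrix has determinant $AB \sin^2\theta$, which is nonzero for $\theta \in (0, \pi/2)$; this forces $\bar V^s$ and $\bar V^t$ to be constants. Then $\bar V^s\, \partial_s + \bar V^t\, \partial_t$ is a constant-coefficient combination of Killing fields, so defining $\tilde V := \bar V^x(x, y)\, \partial_x + \bar V^y(x, y)\, \partial_y$ gives $\Lie_{\tilde V} g = \Lie_{\bar V} g$, and thus a vector field of the required form solving the Ricci soliton equation.

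The main obstacle I would anticipate is verifying the vanishing of the mixed Ricci components. A direct curvature calculation for the non-diagonal metric \eqref{eq:g} would be lengthy, but the involution trick reduces this step to a one-line symmetry argument. The subsequent invertibility step uses $\sin\theta \neq 0$, which is exactly the non-axisymmetry condition on $\theta$; this is the reason the argument produces a vector field in only the $(x, y)$ directions.
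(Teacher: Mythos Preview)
Your argument is correct and shares the core averaging step with the paper's proof. Both average $V$ over the torus action to obtain a field $\bar V$ with coefficients depending only on $(x,y)$ that still satisfies the soliton equation, using that $g$ and $\Ric(g)$ are torus-invariant. The paper establishes the block-diagonal structure of $\Ric$ by direct calculation; you obtain the same conclusion more cleanly via the reflection $\sigma$. Where the two diverge is in disposing of the $\bar V^s$ and $\bar V^t$ components: the paper simply asserts at the outset that one may take $V^s = V^t = 0$ ``since the coefficients of $g$ depend only on $x$ and $y$'' (a step that is really a second symmetrization, over the isometry $\sigma$, left implicit), whereas you read off from $(\Lie_{\bar V} g)_{xs} = (\Lie_{\bar V} g)_{xt} = 0$ a $2\times 2$ linear system with nonzero determinant $AB\sin^2\theta$, forcing $\bar V^s$ and $\bar V^t$ to be constant and hence removable as Killing directions. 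Your route is more explicit and fills in what the paper leaves to the reader; the paper's is shorter once one accepts the unargued first reduction.

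One minor slip in your closing remark: the condition $\sin\theta \neq 0$ holds for every $\theta \in (0,\pi/2]$, including the axisymmetric case $\theta = \pi/2$, so the invertibility of your system does not in fact rely on non-axisymmetry (which is the condition $\cos\theta \neq 0$). This has no bearing on the validity of the proof.
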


\begin{proof}
    Suppose that the metric \(g\) of the form \eqref{eq:g}
    satisfies the Ricci soliton equation
    with the vector field \(V\).
    For the Lie derivative \(\Lie_V g\),
    we may assume \(V^s = V^t = 0\)
    since the coefficients of \(g\) depend only on \(x\) and \(y\).
    Calculation shows that the Ricci tensor \(\Ric\) of the toric metric \(g\) is also toric,
    namely \(\Ric = \Ric_{\mathrm{base}} \oplus \Ric_{\mathrm{torus}}\)
    where the metrics \(\Ric_{\mathrm{base}}\) over \((x, y)\)
    and \(\Ric_{\mathrm{torus}}\) over \((s, t)\)
    both have coefficients depending only on \(x\) and \(y\).
    Hence, \(\Lie_V g = 2(\lambda g - \Ric)\) is also toric,
    meaning that \(\Lie_V g = \Lie_{\tilde{V}} g\) for the vector field
    \[
        \tilde{V}(x, y) = \frac{1}{\abs{\Omega}} \int_{\Omega} V(x, y, s, t) \, ds \wedge dt
    \]
    obtained by averaging \(V\) over a region \(\Omega\) in the coordinates \((s, t)\)
    constituting one period of the toric fibers.
\end{proof}

The conditions on \(q\), \(A\), \(B\), \(S^x\), and \(S^y\) in \cref{thm:main}
are the same as those in \cref{lem:f.t.2.4}
with the additional constraint
that \(A\) and \(B\) are constants,
so \(A' = B' = 0\) and \(w(x, y) = 0\).
The value of the constant \(0 < \theta < \pi/2\)
cancels out in the calculations
and does not appear in the resulting conditions.
When \(\theta = \pi/2\) as in \cref{lem:f.t.2.4},
however, the equations \({\Lambda^s}_t = 0\) and \({\Lambda^t}_s = 0\)
in the proof below hold identically instead,
so the constraint that \(A\) and \(B\) are constants is lifted,
resulting in a much wider range of Ricci solitons.

\begin{proof}[Proof of \cref{thm:main}]
    Recall that the Ricci soliton equation
    can be rewritten as
    \begin{equation} \label{eq:ricci.soliton}
        \Lambda \coloneqq g^{-1} (\Ric(g) + \tfrac{1}{2} \Lie_V g) = \lambda I
    \end{equation}
    where \(I\) denotes the identity tensor.
    The Ricci tensor of the metric \eqref{eq:g} is given by
    \begin{align*}
        \Ric_{xx} &=  - \frac{1}{A} \biggl(
                \frac{A''}{2} - \frac{2 q_x A' + 3 q_{xx} A + q_y B' + q_{yy} B}{q}
                + \frac{3 (q_x^2 A + q_y^2 B)}{q^2} + \frac{(A')^2 \cot^2 \theta}{8A}
            \biggr), \\
        \Ric_{xy} &= \frac{2 q_{xy}}{q} + \frac{A' B' \cot \theta}{8 A B}, \\
        \Ric_{ss} &= - A \biggl(
                \frac{A''}{2} - \frac{2 q_x A' + 3 q_{xx} A + q_y B' + q_{yy} B}{q} \\
                &\qquad + \frac{3 (q_x^2 A + q_y^2 B)}{q^2} - \frac{(B (A')^2 + A (B')^2) \cot^2 \theta}{8AB}
            \biggr), \\
        \Ric_{st} &= - \sqrt{AB} \cos \theta \biggl(
                \frac{A'' + B''}{4}
                - \frac{3 (q_x A' + q_y B') + 2 (q_{xx} A + q_{yy} B)}{2 q}
                + \frac{3 (q_x^2 A + q_y^2 B)}{q^2}
            \biggr) \\
            &\qquad + \frac{(B (A')^2 + A (B')^2) \cos \theta}{8 \sqrt{AB} \sin^2 \theta},
    \end{align*}
    and the symmetry \((x, A) \leftrightarrow (y, B)\),
    where all off-diagonal entries of \(\Ric\) are zero
    except \(\Ric_{xy}\), \(\Ric_{yx}\), \(\Ric_{st}\), and \(\Ric_{ts}\).
    For the metric \eqref{eq:g},
    \cref{lem:v} allows us to assume without loss of generality that
    \(V = V^x(x, y) \, \partial_x + V^y(x, y) \, \partial_y\)
    for some functions \(V^x\) and \(V^y\).
    Then, the Lie derivative \(\Lie_V g\) is given by
    \begin{align*}
        (\Lie_V g)_{xx}
            &= \frac{2 (\partial_x V^x) A - V^x A'}{q^2 A^2}
            - \frac{2 (V^x q_x + V^y q_y)}{q^3 A}, \\
        (\Lie_V g)_{xy}
            &= \frac{(\partial_x V^y) A + (\partial_y V^x) B}{q^2 AB}, \\
        (\Lie_V g)_{ss}
            &= \frac{V^x A'}{q^2} - \frac{2 A (V^x q_x + V^y q_y)}{q^3}, \\
        (\Lie_V g)_{st}
            &= \frac{\cos \theta}{\sqrt{AB}} \biggl(
                \frac{V^x A' B + V^y B' A}{2 q^2}
                - \frac{2 A B (V^x q_x + V^y q_y)}{q^3}
            \biggr),
    \end{align*}
    and the same symmetry \((x, A) \leftrightarrow (y, B)\),
    where all off-diagonal entries of \(\Lie_V g\) are zero
    except \((\Lie_V g)_{xy}\), \((\Lie_V g)_{yx}\),
    \((\Lie_V g)_{st}\), and \((\Lie_V g)_{ts}\).
    The Ricci soliton equation \eqref{eq:ricci.soliton}
    is equivalent to the system of equations
    \[
        {\Lambda^x}_y = {\Lambda^y}_x = {\Lambda^s}_t = {\Lambda^t}_s = 0
        \qquad \text{and} \qquad
        {\Lambda^x}_x = {\Lambda^y}_y = {\Lambda^s}_s = {\Lambda^t}_t = \lambda.
    \]
    Given \(0 < \theta < \pi/2\),
    the equations \({\Lambda^s}_t = 0\) and \({\Lambda^t}_s = 0\)
    simplify to
    \begin{align*}
        2 (A' B V^x - A B' V^y)
        + 4 q A B (A' q_x - B' q_y)
        + q^2 (A'^2 B + A B'^2 - 2 A B (A'' - B'')) &= 0 \\
        \intertext{and}
        2 (A' B V^x - A B' V^y)
        + 4 q A B (A' q_x - B' q_y)
        - q^2 (A'^2 B + A B'^2 + 2 A B (A'' - B'')) &= 0
    \end{align*}
    respectively.
    Subtracting these equations and dividing through by \(q^2 A B\) results in
    \[
        \frac{A'^2}{A} + \frac{B'^2}{B} = 0.
    \]
    This forces \(A' = B' = 0\) since \(A > 0\) and \(B > 0\),
    so \(A\) and \(B\) are constants.
    Under this condition, both \({\Lambda^x}_y = 0\) and \({\Lambda^y}_x = 0\)
    simplify to
    \[
        \frac{\partial_y V^x}{A} + \frac{\partial_x V^y}{B} + 4 q q_{xy} = 0,
    \]
    which is equivalent to
    \(\partial_y S^x + \partial_x S^y = 4 q_x q_y\)
    under the introduction of the same auxiliary functions
    \begin{equation} \label{eq:s}
        S^x \coloneqq 2 q q_x + \frac{V^x}{A}
        \qquad \text{and} \qquad
        S^y \coloneqq 2 q q_y + \frac{V^y}{B}
    \end{equation}
    from \eqref{eq:s.f.t}.
    Further calculation shows that
    the equations \({\Lambda^x}_x = {\Lambda^s}_s\) and \({\Lambda^y}_y = {\Lambda^t}_t\)
    simplify to
    \(\partial_x S^x = 2 q_x^2\) and \(\partial_y S^y = 2 q_y^2\)
    respectively,
    whereas \({\Lambda^s}_s = {\Lambda^t}_t\) holds identically.
    Finally, the equation \({\Lambda^x}_x = \lambda\) simplifies to
    \[
        \lambda = q_{xx} A + q_{yy} B
            - \frac{q_x^2 A + q_y^2 B}{q}
            - \frac{S^x q_x A + S^y q_y B}{q^2}.
    \]
    Hence, we have reduced the Ricci soliton equation \eqref{eq:ricci.soliton}
    to the system of equations \labelcref{eq:diff.eq.S,eq:diff.eq.lambda}.
\end{proof}

\subsection{Corollaries}

The description of Einstein metrics of the form \eqref{eq:g}
follows immediately from \cref{thm:main}.

\begin{proof}[Proof of \cref{cor:einstein}]
    If the metric \eqref{eq:g} is an Einstein metric,
    then we have \(V = 0\) in \cref{thm:main},
    so the auxiliary functions \eqref{eq:s} are
    \(S^x = 2 q q_x\) and \(S^y = 2 q q_y\)
    respectively.
    Hence, the system of equations \eqref{eq:diff.eq.S}
    reduces to
    \(q_{xx} = q_{xy} = q_{yy} = 0\),
    so \(q = ax + by + c\) for some constants \(a\), \(b\), and \(c\).
    Finally, \eqref{eq:diff.eq.lambda} becomes
    \(\lambda = -3(a^2 A + b^2 B)\).
\end{proof}

Finally, we classify Ricci solitons of the form \eqref{eq:g}
whose base metric \eqref{eq:g.base}
is conformally cylindrical.

\begin{proof}[Proof of \cref{cor:cc}]
    For a Ricci soliton \(g\) of the form \eqref{eq:g},
    \cref{thm:main} implies that
    the corresponding diagonal metric \eqref{eq:g.f.t}
    is a Ricci soliton classified in \cref{thm:f.t.1.1.1}.
    There are only three cases
    where \(A\) and \(B\) can be constants, namely
    \begin{enumerate}
    \item when \(g\) is an Einstein metric
        as described in \cref{cor:einstein};
    \item when \(g\) is a product of two Ricci soliton surfaces
        with the same constant \(\lambda\)
        defined in the Ricci soliton equation;
        and
    \item when \(g\) is a product of a flat surface with an Einstein surface.
    \end{enumerate}
    We compute the form of \(g\) explicitly in the last case.
    Suppose that \(g = g_{\mathrm{Einstein}} \oplus g_{\mathrm{flat}}\)
    where
    \[
        g_{\mathrm{Einstein}} = \frac{1}{q(x, y)^2} \biggl(
            \frac{dx^2}{A} + A \, ds^2
        \biggr)
    \]
    is an Einstein metric and
    \[
        g_{\mathrm{flat}} = \frac{1}{q(x, y)^2} \biggl(
            \frac{dy^2}{B} + B \, dt^2
        \biggr)
    \]
    is a flat metric.
    The Ricci tensor of \(g_{\mathrm{flat}}\) is diagonal with entries
    \[
        \Ric_{\mathrm{flat}, yy} = \frac{q q_{yy} - q_y^2}{q^2}
        \qquad \text{and} \qquad
        \Ric_{\mathrm{flat}, tt} = \frac{B^2 (q q_{yy} - q_y^2)}{q^2},
    \]
    so the flatness condition reduces to
    \(q q_{yy} - q_y^2 = 0\),
    whose solution is
    \(q(x, y) = h(x) e^{y f(x)}\)
    for some functions \(f\) and \(h\).
    Similar calculation shows that the Einstein condition
    on \(g_{\mathrm{Einstein}}\) reduces to
    \(q q_{xx} - q_x^2 = {\lambda}/{A}\),
    which becomes
    \begin{equation} \label{eq:cc.1}
        e^{2y f} (h (x h f'' + h'') - (h')^2) = \frac{\lambda}{A}.
    \end{equation}
    Since the left-hand side is constant for all \(x\) and \(y\),
    its partial derivative with respect to \(y\) must be zero,
    which results in
    \begin{equation} \label{eq:cc.2}
        f'' = - \frac{2 f (h h'' - (h')^2)}{h^2 (2 y f + 1)}.
    \end{equation}
    Substituting \eqref{eq:cc.2} into \eqref{eq:cc.1} results in
    \[
        \frac{e^{2 y f} (h h'' - (h')^2)}{2 y f + 1} = \frac{\lambda}{A}.
    \]
    Setting the partial derivative of the left-hand side
    with respect to \(y\) equal to zero again results in \(y f(x) = 0\),
    which must hold for all \(x\) and \(y\).
    Hence, \(f\) is identically zero,
    and \eqref{eq:cc.1} reduces to
    \(h h'' - (h')^2 = {\lambda}/{A}\),
    whose solutions, up to translating and rescaling \(x\), are
    \(h(x) = \pm \sqrt{\lambda/A} \cosh x\)
    for \(\lambda \ge 0\) and
    \(h(x) = \pm \sqrt{-\lambda/A} \sinh x\)
    and \(h(x) = \pm \sqrt{-\lambda/A} \sin x\)
    for \(\lambda \le 0\).
    We conclude that the metric is given by
    \[
        g = \frac{1}{f(x)^2} \biggl(
            \frac{dx^2}{A} + \frac{dy^2}{B} + A \, ds^2 + B \, dt^2
        \biggr)
    \]
    up to translating, rescaling, and swapping \(x\) and \(y\),
    where \(f \in \{\cosh, \sinh, \sin\}\).
\end{proof}

\section*{Acknowledgments}

I sincerely thank my mentor Benjy Firester
for suggesting this topic and guiding me through the research project,
always providing valuable and timely feedback.
I would like to thank the MIT PRIMES research program
for providing me with this research opportunity.
I would also like to thank Raphael Tsiamis
for helpful discussions.

\printbibliography

\end{document}